\newtheorem{theorem}{Theorem}[section]
\newtheorem{lemma}[theorem]{Lemma}
\theoremstyle{definition}
\theoremstyle{remark}
\numberwithin{equation}{section}
\begin{document}
	\title{On Maps that Preserve the Lie Products Equal to Fixed Elements}
	\author{Shiv Kumar Chaudhary} 
	\address{Department of Mathematics,	Indian Institute of Technology Patna, Patna-801106}
	\curraddr{}
	\email{E-mail: shiv\_2421ma17@iitp.ac.in} 
	\thanks{}
	
	%\author{Sk Aziz}
	%\address{Department of Mathematics, Indian Institute of Technology Patna, Patna-801106}
	%\curraddr{}
	%\email{E-mail: aziz\_2021ma22@iitp.ac.in}
	%\thanks{}
	
	\author{Om Prakash$^{\star}$}
	\address{Department of Mathematics, Indian Institute of Technology Patna, Patna-801106}
	\curraddr{}
	\email{om@iitp.ac.in}
	\thanks{* Corresponding author}

	\subjclass[2020]{15A04, 16W10}
	
	\keywords{Linear preserves; Jordan product; Lie product.}
	\date{}
	
	\dedicatory{}
	\maketitle
	\begin{abstract}
		   This work characterizes the general form of a bijective linear map $\Psi:\mathscr{M}_n(\mathbb{C}) \to \mathscr{M}_n(\mathbb{C})$ such that $[\Psi(A_1),~\Psi(A_2)]=D_2$ whenever $[A_1,~A_2]=D_1$ where $D_1~\text{and}~D_2$ are fixed matrices. Additionally, let $\mathscr{H}_1$ and $\mathscr{H}_2$ be the infinite-dimensional complex Hilbert spaces. We characterize the bijective linear map $\Psi: \mathscr{B}(\mathscr{H}_1) \to \mathscr{B}(\mathscr{H}_2)$ where $\Psi(A_1) \circ ~\Psi(A_2)=D_2$ whenever $A_1\circ ~A_2=D_1$ and $D_1~\text{and}~D_2$ are fixed operators.  \\
	\end{abstract}
	
	\keywords{}
	\section{Introduction}
	Let $\mathscr{M}_n$ be the algebra consisting of all $n\times n$ matrices with entries from the complex field $\mathbb{C}$. The Lie product of $A_1, A_2 \in \mathscr{M}_n$ is $[A_1,~A_2]=A_1A_2-A_2A_1$ and the Jordan product is $A_1 \circ A_2=A_1A_2+A_2A_1$.\\\\ In 2021, Julius \cite{jul} raised the question of characterizing bijective linear maps that preserve Lie products equal to some fixed elements as follows.\\

 \textbf{Problem 1:}\label{prob} Let $D_1\in \mathscr{M}_n$ be a fixed matrix, and let $\Psi: \mathscr{M}_n \to \mathscr{M}_n$ be a bijective linear map such that $\Psi(D_1)=D_1$ and
	$$
	[\Psi(A_1),~\Psi(A_2)] = D_1 \quad \text{whenever} \quad  [A_1,~A_2] = D_1.
	$$\\
Can we characterize or describe the structure of the map $\Psi$?\\
This paper answers his problem and obtains the general form of such a bijective linear map.\\

In fact, Problem \ref{prob} has been studied earlier in various partial forms. The most familiar case of the problem \ref{prob} was studied by Watkins \cite{wat} where $D_1=0$, and he proved that if a bijective linear map $\Psi:\mathscr{M}_n(F) \to \mathscr{M}_n(F)$, $n\geq4$ such that $[A_1,~A_2]=0$ implies that $[\Psi(A_1),~\Psi(A_2)]=0$ for all $A_1,A_2 \in \mathscr{M}_n$. Then there exists a linear functional $\eta$ on $\mathscr{M}_n$ and a non-singular matrix $U$ such that $$\Psi(A_1)=cU^{-1}A_1U+\eta(A_1)I$$ or $$\Psi(A_1)=cU^{-1}A_1^tU+\eta(A_1)I$$ for all $A_1 \in \mathscr{M}_n$, $c$ is a scalar, $A_1^t$ denotes the transpose of $A_1$, $I$ is an $n\times n$ identity matrix, and $F$ is an algebraically closed field. Maps of these forms are called commutativity-preserving. Later, it was studied by many algebraists over various algebras \cite{bei1,bei2,bei3,bre1,bre2,Lin}. For more details, we refer to \cite{shi,cat2}.

Recall that, for each pair of indices $i,j \in \{1,2, \dots n \}$, let $e_{ij}$ be the matrix having $1$ in the $(i,j)$-positions and $0$ elsewhere. After a long gap, in 2020, Ginsburg et al. \cite{gin} demonstrated that problem \ref{prob} for a rank-one nilpotent matrix. They showed that if a bijective linear map $\Psi:\mathscr{M}_n \to \mathscr{M}_n$, $n\geq5$ satisfies $[\Psi(A_1),~ \Psi(A_2)]=e_{12}$ with $[A_1,~A_2]=e_{12}$ and $\Psi(e_{12})=e_{12}$ for every $A_1,A_2 \in \mathscr{M}_n$. Then there exists a linear functional $\eta$ on $\mathscr{M}_n$ and a non-singular matrix $U$ such that for $(i,j)\neq(2,1)$, $$\Psi(e_{ij})=U^{-1}e_{ij}U+\eta(e_{ij})I$$ or $$\Psi(e_{ij})=-U^{-1}e_{ji}U+\eta(e_{ij})I.$$ In either case, $\Psi(e_{21}) = ce_{21} + X$ where $c$ is non-zero scalar and $X \in \mathscr{M}_n$ has a zero $(2,1)$-entry. \\
Recently, H. Julius \cite{jul} proved the problem \ref{prob} for rank-two trace-zero matrices. Basically, he proved that if a bijective linear map $\Psi: \mathscr{M}_n \to \mathscr{M}_n$, $n \geq 5$ satisfies $[\Psi(A_1),~\Psi(A_2)]=e_{11}-e_{22}$ with $[A_1,~A_2]=e_{11}-e_{22}$ and $\Psi(e_{11}-e_{22})=e_{11}-e_{22}$ for every $A_1, A_2 \in \mathscr{M}_n$. Then there exists a linear functional $\eta$ on $\mathscr{M}_n$ and a non-singular matrix $U$  such that $$\Psi(A_1)=U^{-1}A_1U+\eta(A_1)I$$ or $$\Psi(A_1)=-U^{-1}A_1^tU+\eta(A_1)I$$ for all $A_1 \in \mathscr{M}_n$.\\

In this paper, we solve the problem \ref{prob} and obtain the most general form, i.e., maps preserving Lie products equal to fixed matrices, without the condition $\Psi(D_1)=D_1$. Of course, we have nice descriptions for rank-one and rank-two trace-zero matrices. We denote $sl_n$ to be the set consisting of all matrices with trace equal to zero and prove the following. \\
\begin{theorem}
    \label{th1}
	Let $D_1,~D_2 \in sl_n$ be fixed matrices and a bijective linear map $\Psi : sl_n \to sl_n$ satisfying $$[\Psi(A_1),~\Psi(A_2)]=D_2 ~~\text{whenever}~~ [A_1,~A_2]=D_1 $$ for all $A_1,A_2 \in sl_n$. Then there exists a non-singular matrix $U$ and a non-zero scalar $c$ such that $$\Psi(A_1)=cUA_1U^{-1}$$ or $$\Psi(A_1)=cUA_1^tU^{-1}$$ for all $A_1 \in sl_n$.
\end{theorem}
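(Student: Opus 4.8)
The plan is to convert the single-valued preserver hypothesis into an infinitesimal commutativity statement and then invoke the known classification of commutativity-preserving maps. First I would record two elementary facts: since $\mathrm{tr}[A_1,A_2]=0$ the bracket always lands in $sl_n$, and since every trace-zero matrix is a commutator the fiber $\{(A_1,A_2): [A_1,A_2]=D_1\}$ is nonempty, so the hypothesis is never vacuous. The engine of the proof is linearization along the fiber: if $[A_1,A_2]=D_1$ and $(\delta A,\delta B)$ satisfies $[\delta A, A_2] + [A_1,\delta B]=0$, then $(A_1+s\,\delta A,\, A_2+s\,\delta B)$ stays in the fiber to first order, and differentiating $[\Psi(A_1+s\,\delta A),\Psi(A_2+s\,\delta B)]=D_2$ at $s=0$ yields
$$[\Psi(\delta A),\Psi(A_2)] + [\Psi(A_1),\Psi(\delta B)] = 0. \qquad (\star)$$
Specializing $\delta B=0$ (so $\delta A\in \mathcal{C}(A_2)$) gives the centralizer inclusion $\Psi(\mathcal{C}(A_2))\subseteq \mathcal{C}(\Psi(A_2))$, and symmetrically for $A_1$.

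Next I would identify exactly which matrices these inclusions control. Using that $\mathrm{ad}_A$ is skew-adjoint for the trace form $\langle X,Y\rangle=\mathrm{tr}(XY)$, one gets $\mathrm{im}(\mathrm{ad}_A)=(\mathcal{C}(A)\cap sl_n)^{\perp}$, so $A$ occurs in a solution pair precisely when $D_1\perp \mathcal{C}(A)$. The target is to promote these a priori partial inclusions to full commutativity preservation, namely $[X,Y]=0\Rightarrow[\Psi X,\Psi Y]=0$ for all $X,Y\in sl_n$. For a commuting pair $(X,Y)$ I would try to manufacture a solution with $A_1\in\mathcal{C}(Y)$, $A_2\in\mathcal{C}(X)$ of $[A_1,A_2]=D_1$ and feed it to $(\star)$; when $X$ or $Y$ is non-regular the centralizers are large enough to realize $D_1$ as such a commutator, while the remaining (regular) commuting pairs would be reached by a Zariski-density and continuity argument, with the rank-one ($D_1=e_{12}$) and rank-two ($D_1=e_{11}-e_{22}$) cases already in the literature serving as anchors after decomposing a general $D_1$.

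Once full commutativity preservation is in hand, I would finish by classification. Extend $\Psi$ to $\widetilde\Psi$ on $\mathscr{M}_n=sl_n\oplus\mathbb{C}I$ by $\widetilde\Psi(A+\lambda I)=\Psi(A)+\lambda I$; since $I$ is central, $\widetilde\Psi$ is a bijective commutativity preserver on $\mathscr{M}_n$, so Watkins' theorem gives $\widetilde\Psi(A)=cUAU^{-1}+\eta(A)I$ or $\widetilde\Psi(A)=cUA^{t}U^{-1}+\eta(A)I$. Restricting to $A\in sl_n$ and imposing $\mathrm{tr}\,\Psi(A)=0$ forces $n\,\eta(A)=c\,\mathrm{tr}(A)=0$, hence $\eta\equiv0$ on $sl_n$; bijectivity gives $c\neq0$ and $U$ nonsingular, which is exactly the claimed dichotomy. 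Substituting back into the hypothesis then pins down the compatibility $D_2=c^{2}UD_1U^{-1}$ (respectively $D_2=-c^{2}U D_1^{t}U^{-1}$), confirming internal consistency.

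The main obstacle is the middle step, upgrading the centralizer inclusions, which hold only for the thin set of $A$ with $D_1\perp\mathcal{C}(A)$, to commutativity preservation for every commuting pair. The difficulty is genuine: for a pair of simultaneously diagonalizable regular semisimple matrices the centralizers are abelian, so no solution $[A_1,A_2]=D_1$ with $A_2\in\mathcal{C}(X)$ and $A_1\in\mathcal{C}(Y)$ exists when $D_1\neq0$, and $(\star)$ yields nothing directly there. I expect the resolution to require working at non-regular solution points, a careful degeneration argument to pass to the regular pairs, and the known low-rank cases as base anchors; this is where the bulk of the technical work will lie, with the hypothesis $n\geq4$ entering through Watkins' classification and the small-$n$ situations treated separately.
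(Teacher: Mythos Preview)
Your linearization step $(\star)$ and the resulting centralizer inclusions $\Psi(\mathcal{C}(A_i))\subseteq\mathcal{C}(\Psi(A_i))$ are exactly what the paper establishes in Lemma~1.1. Where you diverge is in the intermediate target: you aim for \emph{full} commutativity preservation on $sl_n$ and then Watkins' classification theorem, and you correctly flag that upgrading the partial centralizer inclusions to $[X,Y]=0\Rightarrow[\Psi X,\Psi Y]=0$ for arbitrary commuting pairs is a genuine obstacle (your degeneration/Zariski-density sketch is not a proof). The paper never attempts this step and does not need it.

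Instead, the paper extracts a weaker but sufficient consequence of the centralizer inclusion: it preserves \emph{rank-one trace-zero} matrices. For a rank-one $A'\in sl_n$ one has $\dim\mathcal{C}(A')=n^{2}-2n+2$, so the inclusion forces $\dim\mathcal{C}(\Psi(A'))\ge n^{2}-2n+2$; Watkins' \emph{lemma} (not his main theorem) then says any matrix with centralizer of that dimension is of the form $\lambda I+B'$ with $B'$ rank one, and the trace-zero constraint kills the scalar part, giving $\Psi(A')$ rank one and trace zero. At this point the paper invokes the Marcus--Moyls classification of linear rank-one preservers, $\phi(A)=UAV$ or $UA^{t}V$, applied to the extension $\phi(A+cI)=\Psi(A)+cI$; the condition $\mathrm{tr}\,\phi(A)=0$ for $A\in sl_n$ then forces $VU=cI$, yielding the stated dichotomy. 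So the route is centralizer inclusion $\Rightarrow$ rank-one preservation $\Rightarrow$ Marcus--Moyls, rather than centralizer inclusion $\Rightarrow$ commutativity preservation $\Rightarrow$ Watkins. This replaces the hard step you identified with a dimension count, at the cost of quoting a different (and in fact older) preserver theorem.
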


The above result represents the standard form of a map that preserves commutativity on $sl_n$ (for details see Lin \cite{Lin}). Furthermore, an analogous characterization can be derived for such a map on the complete matrix algebra $\mathscr{M}_n$. Towards this, we have the following.

\begin{theorem}
	\label{th2}
	Let $D_1,D_2 \in \mathscr{M}_n$ be the fixed matrices and a bijective linear map $\Psi : \mathscr{M}_n \to \mathscr{M}_n$ satisfying $$[\Psi(A_1),~\Psi(A_2)]=D_2 ~~\text{whenever}~~ [A_1,~A_2]=D_1 $$ for all $A_1,A_2 \in \mathscr{M}_n$. Then there exists a non-singular matrix $U$, a non-zero scalar $c$ and a linear functional $\eta : \mathscr{M}_n \to \mathbb{C}$ such that $$\Psi(A_1)=cUA_1U^{-1}+\eta(A_1)I$$ or $$\Psi(A_1)=cUA_1^tU^{-1}+\eta(A_1)I$$ for all $A_1 \in \mathscr{M}_n$.
\end{theorem}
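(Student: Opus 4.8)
The plan is to deduce Theorem \ref{th2} from Theorem \ref{th1} by passing to the trace-zero part, after first pinning down the image of the identity. Since every commutator lies in $sl_n$, the hypothesis forces $D_1,D_2\in sl_n$. Write $\pi(X)=X-\tfrac{1}{n}\mathrm{tr}(X)I$ for the projection of $\mathscr{M}_n$ onto $sl_n$ along $\mathbb{C}I$, and define $\tilde\Psi:sl_n\to sl_n$ by $\tilde\Psi(A)=\pi(\Psi(A))$. This map is linear, and because scalar matrices are central the commutator is unchanged by $\pi$: for $A_1,A_2\in sl_n$ with $[A_1,A_2]=D_1$ one has $[\tilde\Psi(A_1),\tilde\Psi(A_2)]=[\Psi(A_1),\Psi(A_2)]=D_2$. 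Thus $\tilde\Psi$ will satisfy the hypotheses of Theorem \ref{th1} provided it is bijective, and bijectivity will follow once I show $\Psi(I)\in\mathbb{C}I$.

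The key step is therefore to prove that $\Psi(I)$ is a scalar. For any pair with $[A_1,A_2]=D_1$ and any $s,t\in\mathbb{C}$ we have $[A_1+sI,A_2+tI]=D_1$, so expanding $[\Psi(A_1+sI),\Psi(A_2+tI)]=D_2=[\Psi(A_1),\Psi(A_2)]$ and comparing the coefficients of $s$ and $t$ yields
$$[\Psi(I),\Psi(A_1)]=[\Psi(I),\Psi(A_2)]=0.$$
Hence $\Psi(I)$ commutes with $\Psi(A)$ for every $A$ in $\mathcal{R}:=\{A\in\mathscr{M}_n:[A,B]=D_1 \text{ for some } B\}$. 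The trace identity $\mathrm{tr}([A,B]A)=0$ shows $\mathcal{R}\subseteq D_1^{\perp}:=\{X:\mathrm{tr}(D_1X)=0\}$, and the main lemma I would prove is that $\mathcal{R}$ in fact spans the whole hyperplane $D_1^{\perp}$ when $D_1\neq0$ (when $D_1=0$ one has $\mathcal{R}=\mathscr{M}_n$ and $D_2=0$ directly). Granting this, $\Psi(I)$ commutes with $\Psi(D_1^{\perp})$, a subspace of codimension one, so by bijectivity $\Psi(I)$ commutes with a hyperplane of $\mathscr{M}_n$. Since the centralizer of a non-scalar matrix has dimension at most $n^2-2n+2<n^2-1$ for $n\geq2$ (equivalently, $\mathrm{ad}_X$ always has even rank), a matrix whose centralizer contains a hyperplane must be scalar; hence $\Psi(I)=\beta I$. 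Injectivity of $\Psi$ gives $\beta\neq0$, so $\Psi(\mathbb{C}I)=\mathbb{C}I$ and $\Psi$ descends to a linear bijection of $\mathscr{M}_n/\mathbb{C}I\cong sl_n$ which is exactly $\tilde\Psi$; in particular $\tilde\Psi$ is bijective.

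Now Theorem \ref{th1} applies to $\tilde\Psi$, giving a non-singular $U$ and non-zero $c$ with $\tilde\Psi(A)=cUAU^{-1}$ for all $A\in sl_n$, or $\tilde\Psi(A)=cUA^{t}U^{-1}$ for all $A\in sl_n$. I treat the first case, the second being identical. For $A\in sl_n$ we then have $\Psi(A)=cUAU^{-1}+\lambda(A)I$, where $\lambda(A)=\tfrac1n\mathrm{tr}(\Psi(A))$ is linear. For a general $M\in\mathscr{M}_n$ write $M=A+\tfrac{1}{n}\mathrm{tr}(M)I$ with $A=\pi(M)\in sl_n$; using $\Psi(I)=\beta I$ together with $cU\big(\tfrac1n\mathrm{tr}(M)I\big)U^{-1}=\tfrac1n\mathrm{tr}(M)cI$ one obtains
$$\Psi(M)=cUMU^{-1}+\Big(\lambda(\pi(M))+\tfrac{1}{n}\mathrm{tr}(M)(\beta-c)\Big)I.$$
Setting $\eta(M):=\lambda(\pi(M))+\tfrac1n\mathrm{tr}(M)(\beta-c)$ defines a linear functional and gives $\Psi(M)=cUMU^{-1}+\eta(M)I$, the asserted form (and $\Psi(M)=cUM^{t}U^{-1}+\eta(M)I$ in the transpose case).

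The genuine obstacle is the spanning lemma $\mathrm{span}(\mathcal{R})=D_1^{\perp}$; everything else is bookkeeping around the decomposition $\mathscr{M}_n=sl_n\oplus\mathbb{C}I$. To prove it I would argue by duality: if $\mathrm{tr}(CA)=0$ for all $A\in\mathcal{R}$, then since $[P,Q]=D_1$ implies $[P,Q+R]=D_1$ for every $R$ in the centralizer $Z(P)$, one gets $C\perp Z(P)$, and symmetrically $C\perp Z(Q)$; running this over all admissible pairs reduces the claim to showing that the centralizers $Z(A)$ with $A\in\mathcal{R}$ together span $D_1^{\perp}$. This in turn comes down to a statement about the rank-one elements $vw^{*}$ with $w^{*}D_1v=0$, which can be settled by an irreducibility argument on the bilinear form $(v,w)\mapsto w^{*}D_1v$, with a separate easy treatment when $\mathrm{rank}(D_1)=1$. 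I expect this lemma, rather than the reduction or the final reconstruction, to be where the real work lies.
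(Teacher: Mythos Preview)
Your approach is essentially the same as the paper's: both reduce Theorem~\ref{th2} to Theorem~\ref{th1} by first showing $\Psi(I)\in\mathbb{C}I$ and then passing to the trace-zero part via the decomposition $\mathscr{M}_n=sl_n\oplus\mathbb{C}I$. The projection $\tilde\Psi=\pi\circ\Psi|_{sl_n}$ that you introduce is exactly the map $\psi(A_1)=\Psi(A_1)-\eta(A_1)I$ that the paper defines, and the final reconstruction of $\Psi$ on all of $\mathscr{M}_n$ is the same bookkeeping in both.

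The one place the arguments differ in presentation is the justification that $\Psi(I)$ is scalar. The paper invokes parts~1 and~2 of Lemma~\ref{lem1} to get $[\Psi(I),\Psi(Y)]=0$ for $Y\in\mathcal{C}(A_1)$ and $[\Psi(X),\Psi(I)]=0$ for $X\in\mathcal{C}(A_2)$, then asserts $\dim\mathcal{C}(\Psi(I))>n^2-2n+2$ and applies Watkins' lemma. Your route via the set $\mathcal{R}=\{A:[A,B]=D_1\text{ for some }B\}$ and the spanning claim $\mathrm{span}(\mathcal{R})=D_1^{\perp}$ is really the same computation unpacked: the observation $Q+Z(P)\subseteq\mathcal{R}$ whenever $[P,Q]=D_1$ is precisely Lemma~\ref{lem1}(1) read contrapositively, and your hyperplane bound $n^2-1>n^2-2n+2$ is the input to the same Watkins-type conclusion. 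So you have not taken a different route, but you have been more explicit about where the actual content sits --- and you are right that the spanning lemma (or equivalently the paper's asserted dimension inequality) is the step that carries the weight, while the paper leaves it essentially unargued.
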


If we replace the Lie product with the Jordan product in Theorem \ref{th2}, we obtain the result that was previously proved by L. Catalano et al. \cite{cat1}. For the most trivial case, see \cite{che}. We generalize those results here for infinite dimensions. Let $\mathscr{H}_1$ and $\mathscr{H}_2$ be the infinite-dimensional complex Hilbert spaces, and let $\mathscr{B}(\mathscr{H}_1)$ and $ \mathscr{B}(\mathscr{H}_2)$ be the corresponding algebras of bounded linear operators. We say an additive map $\Psi: \mathscr{B}(\mathscr{H}_1) \to \mathscr{B}(\mathscr{H}_2)$ is square zero preserving if $A_1^2=0$, then $\Psi(A_1)^2=0$ for every $A_1 \in \mathscr{B}(\mathscr{H}_1)$ and an idempotent preserving in both directions if $A_1^2=A_1$ if and only if $\Psi(A_1)^2=\Psi(A_1)$ for every $A_1 \in \mathscr{B}(\mathscr{H}_1)$. Basically, we will prove the following.

\begin{theorem}
    \label{th3}
	Let $D_1 \in \mathscr{B}(\mathscr{H}_1)$ and $D_2 \in \mathscr{B}(\mathscr{H}_2)$ are fixed operators, and let $\Psi : \mathscr{B}(\mathscr{H}_1) \to \mathscr{B}(\mathscr{H}_2)$ be a linear map such that $$\Psi(A_1) \circ \Psi(A_2)=D_2 ~~\text{whenever}~~ A_1 \circ A_2=D_1$$ for every $A_1,A_2 \in \mathscr{B}(\mathscr{H}_1)$. Then $\Psi$ preserves square-zero operators.
\end{theorem}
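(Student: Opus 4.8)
The plan is to deduce square-zero preservation directly from the defining implication, by inserting the square-zero operator symmetrically into a single well-chosen pair whose Jordan product already equals $D_1$. Fix a square-zero operator $T$, so that $T\circ T=2T^2=0$. The whole argument rests on the following elementary observation: if one can find $A,B$ with $A\circ B=D_1$ whose sum Jordan-commutes with $T$, i.e. $(A+B)\circ T=0$, then for every scalar $s$
\[
(A+sT)\circ(B+sT)=A\circ B+s\,(A+B)\circ T+s^{2}\,T\circ T=D_1 .
\]
Thus the entire one-parameter family of pairs $(A+sT,\,B+sT)$ lies in the fibre over $D_1$, so the hypothesis gives $\Psi(A+sT)\circ\Psi(B+sT)=D_2$ for all $s$. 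Expanding the left side by linearity of $\Psi$ yields, as an identity of operator-valued polynomials in $s$,
\[
D_2+s\bigl(\Psi(A)\circ\Psi(T)+\Psi(T)\circ\Psi(B)\bigr)+s^{2}\,\Psi(T)\circ\Psi(T)=D_2 ,
\]
and comparing the coefficient of $s^{2}$ forces $2\Psi(T)^{2}=\Psi(T)\circ\Psi(T)=0$. Hence everything reduces to producing the base pair $(A,B)$.

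Here the square-zero structure of $T$ is exactly what is needed. I would let $P$ be the orthogonal projection of $\mathscr{H}_1$ onto $\ker T$ and set $J=2P-I$; since $T^{2}=0$ gives $\operatorname{ran}T\subseteq\ker T$, one checks $PT=T$ and $TP=0$, whence $JT=T$, $TJ=-T$, so $JT+TJ=0$ and $J^{2}=I$. I then look for the base pair in the symmetric form $A=\tfrac12 C+M$, $B=\tfrac12 C-M$ with $C=tJ$ for a large real parameter $t$. With this choice the sum is automatically good, $A+B=C=tJ$ and $(A+B)\circ T=t\,(JT+TJ)=0$, while the cross terms cancel to give $A\circ B=\tfrac12 C^{2}-2M^{2}=\tfrac{t^{2}}{2}I-2M^{2}$ using $C^{2}=t^{2}I$.

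It then remains only to pick $M$ so that $A\circ B=D_1$, that is $M^{2}=\tfrac{t^{2}}{4}I-\tfrac12 D_1=\tfrac{t^{2}}{4}\bigl(I-\tfrac{2}{t^{2}}D_1\bigr)$. For $t^{2}>2\|D_1\|$ the operator $I-\tfrac{2}{t^{2}}D_1$ is invertible with spectrum in the open disc of radius $1$ about $1$, so its principal square root exists (for instance as the norm-convergent binomial series), and $M=\tfrac{t}{2}\bigl(I-\tfrac{2}{t^{2}}D_1\bigr)^{1/2}$ is a bounded operator with the required square; this completes the base pair and hence the proof. I regard the single creative step as the crux: one must perturb \emph{both} slots by the same $sT$, so that only the weak condition $(A+B)\circ T=0$ on the sum is required, rather than $A\circ T=B\circ T=0$ on each factor. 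The stronger condition genuinely cannot be met in general — for a rank-one $T$ the attainable values $A\circ B$ with $A,B$ each Jordan-commuting with $T$ fill only a proper subspace that misses a generic $D_1$ — which is precisely why the symmetric insertion, combined with the grading symmetry $J$ supplied by the nilpotent structure of $T$, is the essential idea rather than a routine computation.
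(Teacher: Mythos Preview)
Your proof is correct and takes a more direct route than the paper. The paper factors the argument through three lemmas: first it rewrites the Jordan-product hypothesis as a difference-of-squares condition $2\Psi(X)^2-2\Psi(Y)^2=D_2$ whenever $2X^2-2Y^2=D_1$; next it deduces that $P^2=Q^2=cI$ forces $\Psi(P)^2=\Psi(Q)^2$ (using a square root of $cI-\tfrac12 D_1$); and finally it combines this with an auxiliary operator $M_1$ satisfying $M_1^2=cI$ and $N_1\circ M_1=0$ to conclude $\Psi(N_1)^2=0$. Your approach collapses all three steps into a single polynomial-in-$s$ identity by perturbing both slots of a fixed base pair by $sT$, and the same two analytic ingredients appear in disguise: your binomial square root $M$ plays the role of the paper's $T_1$, and your grading symmetry $J=2P-I$ is exactly the operator the paper needs (but does not construct) for its $M_1$. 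What you gain is concision and a self-contained existence argument for the auxiliary operators; what the paper's decomposition buys is a reusable intermediate statement ($\Psi(P)^2=\Psi(Q)^2$ whenever $P^2=Q^2=cI$) that may be of independent interest but is not needed for the theorem itself.
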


\begin{theorem}
\label{th4}
    Let $D_1 \in \mathscr{B}(\mathscr{H}_1)$ and $D_2 \in \mathscr{B}(\mathscr{H}_2)$ are fixed operators, and let $\Psi : \mathscr{B}(\mathscr{H}_1) \to \mathscr{B}(\mathscr{H}_2)$ be a bijective linear map such that $$\Psi(A_1) \circ \Psi(A_2)=D_2 ~~\text{whenever}~~ A_1 \circ A_2=D_1$$ for every $A_1,A_2 \in \mathscr{B}(\mathscr{H}_1)$. Then there exists a non-zero scalar $c$ and an invertible operator $U \in \mathscr{B}(\mathscr{H}_1, \mathscr{H}_2)$ such that $$\Psi(A_1)=cUA_1U^{-1}$$ or $$\Psi(A_1)=cUA_1^\star U^{-1},$$ where $A_1^\star$ denotes the transpose of $A_1$ related to a fixed but arbitrary orthonormal base of $\mathscr{H}_1$.
\end{theorem}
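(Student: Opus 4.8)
The plan is to build on Theorem~\ref{th3} and then invoke the structure theory of square-zero preservers on $\mathscr{B}(\mathscr{H})$. By Theorem~\ref{th3}, the present hypothesis already forces $\Psi$ to be square-zero preserving, i.e. $A_1^2 = 0$ implies $\Psi(A_1)^2 = 0$; and $\Psi(0) = 0$ since $\Psi$ is linear. Thus the remaining task is purely structural: to classify the bijective linear maps of $\mathscr{B}(\mathscr{H}_1)$ onto $\mathscr{B}(\mathscr{H}_2)$ that preserve square-zero operators.

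First I would promote the one-sided preservation to a two-sided one. Because $\Psi$ is a linear bijection with $\Psi(A)^2 = 0$ whenever $A^2 = 0$, I would show that $\Psi$ maps the set of square-zero operators onto itself, so that $\Psi^{-1}$ is square-zero preserving as well. The key point is that the rank-one square-zero operators can be singled out intrinsically among all square-zero operators (as the minimal nonzero ones, in terms of their ranges and kernels); hence a bijective linear square-zero preserver necessarily carries rank-one square-zero operators to rank-one square-zero operators in both directions.

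Next, with bidirectional preservation of rank-one square-zero operators in hand, I would apply the fundamental theorem of projective geometry to the induced bijections on the projective space of $\mathscr{H}_1$ and on its dual. This yields a semilinear bijection implementing $\Psi$ on rank-one operators, of the shape $x \otimes f \mapsto c\,(Ux)\otimes(f\circ U^{-1})$ in the isomorphism case and the analogous transposed formula in the anti-isomorphism case, for some invertible $U\in\mathscr{B}(\mathscr{H}_1,\mathscr{H}_2)$ and nonzero scalar $c$. Since $\Psi$ is assumed $\mathbb{C}$-linear rather than merely additive, the field automorphism of $\mathbb{C}$ accompanying the semilinear map must be the identity, which discards the conjugate-linear alternatives and leaves exactly $A_1 \mapsto cUA_1U^{-1}$ and $A_1 \mapsto cUA_1^\star U^{-1}$. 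Equivalently, one verifies that $c^{-1}\Psi$ preserves idempotents in both directions and then appeals to the classical description of idempotent-preserving bijective linear maps on $\mathscr{B}(\mathscr{H})$ as Jordan isomorphisms, namely algebra isomorphisms or anti-isomorphisms.

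The hard part will be the passage from the behaviour of $\Psi$ on rank-one (and finite-rank) operators to its behaviour on all of $\mathscr{B}(\mathscr{H}_1)$: in infinite dimensions the finite-rank operators are not norm-dense and do not span the algebra, so knowing $\Psi$ on rank-one operators does not by itself determine the linear map $\Psi$. Overcoming this is exactly where the infinite-dimensionality of $\mathscr{H}_1,\mathscr{H}_2$ and the full structure theory of square-zero/idempotent preservers enter, delivering the boundedness and invertibility of the implementing operator $U$ together with the extension of the two candidate formulas from rank-one operators to the whole algebra. Finally, I would return to the defining relation $\Psi(A_1)\circ\Psi(A_2)=D_2$ whenever $A_1\circ A_2=D_1$: substituting either candidate form gives $c^2\,UD_1U^{-1}=D_2$, which confirms that the scalar $c$ is admissible and that both listed forms are genuinely consistent with the fixed operators $D_1$ and $D_2$.
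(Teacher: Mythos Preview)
Your overall strategy---reduce to square-zero preservation via Theorem~\ref{th3}, then classify bijective linear square-zero preservers on $\mathscr{B}(\mathscr{H})$---matches the paper's opening move, but the classification step is where your plan and the paper diverge, and where your plan has a genuine gap.

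The paper does not go through rank-one operators or projective geometry. Instead it quotes a result of Bai and Hou \cite{bai} which, for any bijective linear square-zero preserver $\Psi$, yields the identities $\Psi(L)\Psi(I)+\Psi(I)\Psi(L)=2\Psi(L)^2$ and $\Psi(I)^2\Psi(L)=\Psi(L)\Psi(I)^2$ for every idempotent $L$. Since every operator is a sum of at most five idempotents \cite{pea}, the second identity makes $\Psi(I)^2$ central, hence scalar; after normalizing to $\Psi(I)^2=I$ one writes $\Psi(I)=2L_1-I$, sets $L_3=\Psi^{-1}(L_1)$, and a short corner argument (showing $L_3\mathscr{B}(\mathscr{H}_1)(I-L_3)=0$, which forces $L_3\in\{0,I\}$) pins down $\Psi(I)=\pm I$. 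After rescaling, the first identity reads $\Psi(L)=\Psi(L)^2$, so $\Psi$ preserves idempotents, and the Bre\v{s}ar--\v{S}emrl theorem \cite{bre3} finishes.

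Your route instead tries to isolate rank-one square-zero operators and invoke the fundamental theorem of projective geometry. Two problems. First, your description of rank-one square-zero operators ``as the minimal nonzero ones, in terms of their ranges and kernels'' is not a property a linear square-zero preserver is automatically known to respect; you would need a characterization phrased purely in terms of which linear combinations of square-zero operators remain square-zero, and you do not supply one. Second, and more seriously, you yourself flag the extension from finite-rank operators to all of $\mathscr{B}(\mathscr{H}_1)$ as ``the hard part'' and then defer it to unspecified ``structure theory''. That deferred step \emph{is} the substance of the proof in infinite dimensions; the paper's route through Bai--Hou and the determination of $\Psi(I)$ is precisely one concrete way to carry it out. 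Your alternative via idempotent preservation is in fact close to the paper's endgame, but it presupposes a scalar $c$ with $\Psi(I)=cI$, and your plan offers no argument for that---which is exactly the step the paper spends most of its effort on.
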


Before going to the main proof, we need a few lemmas. Here, the set $\mathcal{C}(A_1) = \{P\in \mathscr{M}_n~\mid~ A_1P=PA_1\}$ represents the centralizer-subalgebra of $A_1 \in \mathscr{M}_n$.
\begin{lemma}
	\label{lem1}
    Let $\Psi : sl_n \to sl_n$ be a bijective linear map such that
    \begin{equation}
    \label{eq1}
        [\Psi(A_1),~\Psi(A_2)]=D_2 ~~\text{whenever}~~ [A_1,~A_2]=D_1.
    \end{equation}
    Then
    \begin{enumerate}
      \item  $\Psi(\mathcal{C}(A_1)) \subseteq \mathcal{C}(\Psi(A_1))$ and $\Psi(\mathcal{C}(A_2)) \subseteq \mathcal{C}(\Psi(A_2))$.
      \item  $[\Psi(S),~\Psi(T)]=0$ whenever $[S,~T]=0$ with $S \in \mathcal{C}(A_2), T \in \mathcal{C}(A_1)$.
      \item  $\Psi(A')=B'$, where $A'$ and $B'$ are rank-one trace-zero matrix.
    \end{enumerate}
\end{lemma}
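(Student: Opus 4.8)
The plan is to fix, once and for all, a pair $A_1,A_2\in sl_n$ with $[A_1,A_2]=D_1$, so that hypothesis (\ref{eq1}) gives $[\Psi(A_1),\Psi(A_2)]=D_2$, and then to exploit the linearity of $\Psi$ through a one- and two-parameter perturbation of this pair. For part (1), I would take $P\in\mathcal{C}(A_1)$ and observe that $[A_1,A_2+tP]=[A_1,A_2]+t[A_1,P]=D_1$ for every scalar $t$, since $[A_1,P]=0$. Hypothesis (\ref{eq1}) then yields $[\Psi(A_1),\Psi(A_2+tP)]=D_2$, and expanding by linearity gives $D_2+t[\Psi(A_1),\Psi(P)]=D_2$ for all $t$. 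Comparing the coefficient of $t$ forces $[\Psi(A_1),\Psi(P)]=0$, that is $\Psi(P)\in\mathcal{C}(\Psi(A_1))$, whence $\Psi(\mathcal{C}(A_1))\subseteq\mathcal{C}(\Psi(A_1))$. Perturbing $A_1$ by an element $Q\in\mathcal{C}(A_2)$ instead gives the symmetric inclusion $\Psi(\mathcal{C}(A_2))\subseteq\mathcal{C}(\Psi(A_2))$.

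For part (2), I would use the same device with two parameters. Given $S\in\mathcal{C}(A_2)$ and $T\in\mathcal{C}(A_1)$ with $[S,T]=0$, a direct expansion shows $[A_1+sS,\,A_2+tT]=D_1$ for all scalars $s,t$, because the three cross terms $[A_1,T]$, $[S,A_2]$ and $[S,T]$ all vanish by hypothesis. Applying (\ref{eq1}) and expanding the image bracket, the constant term equals $D_2$ while the terms linear in $s$ and in $t$ are governed by $[\Psi(S),\Psi(A_2)]$ and $[\Psi(A_1),\Psi(T)]$, both of which vanish by part (1). What survives is $st\,[\Psi(S),\Psi(T)]=0$ for all $s,t$, and choosing $s=t=1$ gives $[\Psi(S),\Psi(T)]=0$, as required.

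Part (3) is where I expect the main difficulty. The goal is to show that $\Psi$ carries a rank-one trace-zero matrix $A'$ to another rank-one trace-zero matrix $B'=\Psi(A')$. My plan is to first convert the partial commutativity information of parts (1)--(2) into a usable preservation statement: running the perturbation argument across varying realizations of $D_1$, together with the analogous statement for the bijection $\Psi^{-1}$, I would aim to show that $\Psi$ preserves commutativity in both directions on the relevant subspaces, and hence preserves the dimension of centralizers. Then I would characterize a rank-one trace-zero matrix intrinsically as a nonzero element of $sl_n$ whose centralizer in $sl_n$ has the maximal possible dimension $(n-1)^2$; since $\Psi$ is bijective, linear, and dimension-preserving on centralizers, it must send such extremal elements to extremal elements. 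The hard part, and the step I would spend the most effort on, is twofold: first, extracting genuine two-directional commutativity preservation from the one-directional, $D_1$-tethered hypothesis (\ref{eq1}); and second, separating the rank-one nilpotents from the diagonalizable matrices having a single eigenvalue of multiplicity $n-1$, since these two families share the centralizer dimension $(n-1)^2$ in $sl_n$. To break this tie I would examine the internal structure of the centralizer, specifically the abundance of square-zero elements commuting with $A'$, which distinguishes the nilpotent case, and verify that this structural feature is transported by $\Psi$.
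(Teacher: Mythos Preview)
Your arguments for parts (1) and (2) coincide with the paper's; the only difference is that you carry scalar parameters $t$ and $s,t$ and then compare coefficients, whereas the paper simply sets them equal to $1$ from the outset.

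For part (3) the paper takes a shorter and rather different route. Instead of staying inside $sl_n$ and attempting to break the tie you describe, it extends $\Psi$ to a bijective linear map $\eta:\mathscr{M}_n\to\mathscr{M}_n$ via $\eta(X+cI)=\Psi(X)+cI$ (for $X\in sl_n$, $c\in\mathbb{C}$) and then invokes Watkins' lemma in $\mathscr{M}_n$: a matrix has centralizer of dimension strictly greater than $n^2-2n+2$ iff it is a scalar, and equal to $n^2-2n+2$ iff it is a scalar plus a rank-one matrix. The one-sided inclusion coming from part (1) already gives $\dim\mathcal{C}(\eta(A'))\ge n^2-2n+2$; were the inequality strict, $\eta(A')$ would be a scalar, hence zero (since $\eta(A')=\Psi(A')\in sl_n$), contradicting injectivity of $\eta$. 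Thus $\eta(A')=B'+\lambda' I$ with $B'$ of rank one, and the trace-zero constraint is used to eliminate the scalar part. Two things this buys over your outline: first, bidirectional commutativity preservation is never needed, because injectivity alone rules out the strict-inequality case; second, your tie between rank-one nilpotents and diagonalizable trace-zero matrices with an $(n-1)$-fold eigenvalue becomes, in the $\mathscr{M}_n$ picture, simply the dichotomy $\operatorname{tr}(B')=0$ versus $\operatorname{tr}(B')\neq 0$, which the paper handles by the trace condition rather than by analysing square-zero elements of the centralizer.

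One genuine gap in your plan is the appeal to $\Psi^{-1}$: getting the reverse centralizer inclusion that way presupposes that $\Psi^{-1}$ satisfies the analogous hypothesis with $D_1$ and $D_2$ interchanged, and this is neither assumed nor a consequence of bijectivity of $\Psi$. The paper's device of trading bidirectionality for injectivity is exactly what sidesteps this obstacle.
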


\begin{proof} $1.$
     Let $M_1 \in \mathcal{C}(A_1)$. Then $[A_1,~A_2+M_1]=D_1$ and using \ref{eq1}, we get
     \begin{align*}
         D_2=&[\Psi(A_1),~\Psi(A_2+M_1)]\\
          =&[\Psi(A_1),~\Psi(A_2)]+[\Psi(A_1),~\Psi(M_1)].
     \end{align*}
     Again, from \ref{eq1}, we have $[\Psi(A_1),~\Psi(A_2)]=D_2$. Therefore, $$[\Psi(A_1),~\Psi(M_1)]=0.$$
     Similarly, let $N_1 \in \mathcal{C}(A_2)$. Then $[A_1+N_1,~A_2]=D_1$ and using \ref{eq1}, we get
     \begin{align*}
         D_2=&[\Psi(A_1+N_1),~\Psi(A_2)]\\
          =&[\Psi(A_1),~\Psi(A_2)]+[\Psi(N_1),~\Psi(A_2)].
     \end{align*}
     Again, using \ref{eq1}, we have $$[\Psi(N_1),~\Psi(A_2)]=0.$$\\
	$2.$ If $S \in \mathcal{C}(A_2)$, $T \in \mathcal{C}(A_1)$ and $[S,~T]=0$, then $[A_1+S,~A_2+T]=D_1$. By using \ref{eq1}, we get
    \begin{align*}
         D_2=&[\Psi(A_1+S),~\Psi(A_2+T)]\\
          =&[\Psi(A_1),~\Psi(A_2)]+[\Psi(A_1),~\Psi(T)]+[\Psi(S),~\Psi(A_2)]+[\Psi(S),~\Psi(T)].
     \end{align*}
     Align with part $1$, we have $[\Psi(A_1),~\Psi(T)]=0,~[\Psi(S),~\Psi(A_2)]=0$. Therefore, $[\Psi(S),~\Psi(T)]=0.$\\
     $3.$ %It can easily be obtained if $\Psi$ is extended to full matrix algebra $\mathscr{M}_n$ and then by using [\cite{wat},~ Lemma]. \\
     Let $A_1 \in sl_n$, $I$ is identity matrix of order $n$ and a scalar $c \in \mathbb{C}$, now we define a map $\eta: \mathscr{M}_n \to \mathscr{M}_n$ by $$\eta(A_1+cI)=\Psi(A_1)+cI.$$ One can easily check that the map $\eta$ is bijective and linear, and agrees with $\Psi$ on every trace-zero matrix. By part $1$, if $A' \in sl_n$ is a rank-one matrix then $$\text{dim}~\mathcal{C}(\eta(A')) \geq n^2-2n+2.$$ If $\text{dim}~\mathcal{C}(\eta(A')) > n^2-2n+2$, then by [\cite{wat},~Lemma] there exists $n \times n$ identiy matrix $I$ and a scalar $\lambda \in \mathbb{C}$ such that $$\eta(A')=\lambda I.$$ But then $\lambda$ must be $0$, since $\eta(A')=\Psi(A') \in sl_n$, contradicting the bijectivity. Hence $$\text{dim}~\mathcal{C}(\eta(A')) = n^2-2n+2.$$ Again, by [\cite{wat}, ~Lemma], there exists a scalar $\lambda'$ and a rank-one matrix $B'$ such that $$\eta(A')=B'+\lambda'I.$$ If trace of $B'$ is zero, then by assumption, $B'+\lambda'I$ is trace-zero matrix, and implies that $\lambda'=0$. Hence, $$\Psi(A')=B',$$ where $B'$ is rank-one trace-zero matrix.\\
     Otherwise, we know that matrix $B'$ can be written uniquely as $B''+\lambda''I$ where $B''$ is a rank-one trace-zero matrix and $\lambda''$  is some scalar. Therefore, we have $$\eta(A')=B''+\lambda'''I$$ and by under same assumption, we have $\Psi(A')=B''$.
\end{proof}

\begin{lemma}
	\label{lem2}
	Suppose $\Psi: \mathscr{B}(\mathscr{H}_1) \to \mathscr{B}(\mathscr{H}_2)$ is a linear map satisfies the conditions of Theorem \ref{th3} if and only if it satisfies
    \begin{equation}
        \label{eq2}
        2 \Psi(X_1)^2-2\Psi(Y_1)^2=D_2~~\text{whenever}~~2X_1^2-2Y_1^2=D_1
    \end{equation}
    for all $X_1, Y_1 \in \mathscr{B}(\mathscr{H}_1)$.
\end{lemma}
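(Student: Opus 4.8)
The plan is to exhibit an explicit change of variables that converts the Jordan-product hypothesis of Theorem \ref{th3} into the squared-difference hypothesis \eqref{eq2}, and conversely, relying only on the polarization identity for the Jordan product together with the linearity of $\Psi$. The key algebraic fact is that for any $A_1, A_2 \in \mathscr{B}(\mathscr{H}_1)$ one has
$$A_1 \circ A_2 = \frac{1}{2}(A_1 + A_2)^2 - \frac{1}{2}(A_1 - A_2)^2,$$
so that putting $X_1 = \frac{1}{2}(A_1 + A_2)$ and $Y_1 = \frac{1}{2}(A_1 - A_2)$ gives $A_1 \circ A_2 = 2X_1^2 - 2Y_1^2$. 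Since we work over $\mathbb{C}$, division by $2$ is harmless, and the assignment $(A_1, A_2) \mapsto (X_1, Y_1)$ is a bijection on pairs of operators, with inverse $A_1 = X_1 + Y_1$, $A_2 = X_1 - Y_1$.

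First I would record this substitution and verify the identity $A_1 \circ A_2 = 2X_1^2 - 2Y_1^2$ by directly expanding $(X_1 + Y_1) \circ (X_1 - Y_1)$; the cross terms $X_1 Y_1$ and $Y_1 X_1$ cancel, leaving precisely $2X_1^2 - 2Y_1^2$. Next, using that $\Psi$ is linear, so that $\Psi(A_1) = \Psi(X_1) + \Psi(Y_1)$ and $\Psi(A_2) = \Psi(X_1) - \Psi(Y_1)$, the same expansion carried out in $\mathscr{B}(\mathscr{H}_2)$ yields
$$\Psi(A_1) \circ \Psi(A_2) = 2\Psi(X_1)^2 - 2\Psi(Y_1)^2.$$
Thus the left-hand quantities in the two formulations coincide term-for-term under the substitution, and the roles of $D_1$ and $D_2$ are carried over verbatim.

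With these two identities in hand, both implications of the stated equivalence follow at once. If $\Psi$ satisfies the hypothesis of Theorem \ref{th3}, then given any $X_1, Y_1$ with $2X_1^2 - 2Y_1^2 = D_1$ I set $A_1 = X_1 + Y_1$, $A_2 = X_1 - Y_1$; the first identity gives $A_1 \circ A_2 = D_1$, hence $\Psi(A_1) \circ \Psi(A_2) = D_2$, and the second identity rewrites this as $2\Psi(X_1)^2 - 2\Psi(Y_1)^2 = D_2$, which is exactly \eqref{eq2}. The reverse implication is symmetric: given $A_1 \circ A_2 = D_1$, put $X_1 = \frac{1}{2}(A_1 + A_2)$, $Y_1 = \frac{1}{2}(A_1 - A_2)$ and run the same chain in the opposite order. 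The only point requiring any care is confirming that the correspondence between pairs is genuinely bijective, so that the universally quantified \emph{whenever} translates faithfully in each direction; beyond this bookkeeping there is no real obstacle, as the statement is purely a reformulation via polarization.
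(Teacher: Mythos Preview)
Your proof is correct and follows essentially the same approach as the paper: both use the bijective substitution $A_1 = X_1 + Y_1$, $A_2 = X_1 - Y_1$ (with inverse $X_1 = \tfrac{1}{2}(A_1+A_2)$, $Y_1 = \tfrac{1}{2}(A_1-A_2)$) together with linearity of $\Psi$ to convert the Jordan-product condition into the squared-difference condition and back. Your write-up is more explicit about the polarization identity and the bijectivity of the change of variables, but the underlying argument is identical.
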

\begin{proof}
	Putting $A_1=X_1+Y_1$ and $A_2=X_1-Y_1$ in the forward direction, we get
	$$\Psi(X_1+Y_1) \circ \Psi(X_1-Y_1)=D_2 ~~\text{whenever}~~ (X_1+Y_1) \circ (X_1-Y_1)=D_1,$$ and using Linearity of $\Psi$ and appropriate calculations, we get $$ 2 \Psi(X_1)^2-2\Psi(Y_1)^2=D_2 ~~\text{whenever}~~2X_1^2-2Y_1^2=D_1$$ for every $X_1, Y_1 \in \mathscr{B}(\mathscr{H}_1)$.\\
    Similarly, one can prove by substituting $X_1=\frac{A_1+A_2}{2}$ and $Y_1=\frac{A_1-A_2}{2}$ in the backward direction.
\end{proof}

\begin{lemma}
	\label{lem3}
	Let a linear map $\Psi: \mathscr{B}(\mathscr{H}_1) \to \mathscr{B}(\mathscr{H}_2)$ satisfying \ref{eq2}. Then it follows that
    \begin{equation}
        \label{eq3}
        \Psi(P_1)^2=\Psi(Q_1)^2 ~~\text{whenever}~~P_1^2=Q_1^2=cI
    \end{equation}
    for every $P_1, Q_1 \in \mathscr{B}(\mathscr{H}_1)$ and a fixed scalar $c$.
\end{lemma}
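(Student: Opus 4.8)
The plan is to exploit the hypothesis \eqref{eq2} by introducing a third ``anchor'' operator and then subtracting the two resulting identities so that the fixed operator $D_2$ cancels. Concretely, suppose $P_1,Q_1\in\mathscr{B}(\mathscr{H}_1)$ satisfy $P_1^2=Q_1^2=cI$. The idea is to produce an operator $R_1$ with $2P_1^2-2R_1^2=D_1$; since $P_1^2=Q_1^2$, the very same $R_1$ then automatically satisfies $2Q_1^2-2R_1^2=D_1$ as well, so both pairs $(P_1,R_1)$ and $(Q_1,R_1)$ meet the triggering condition of \eqref{eq2}.

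Granting such an $R_1$, I would apply \eqref{eq2} to the two pairs $(X_1,Y_1)=(P_1,R_1)$ and $(X_1,Y_1)=(Q_1,R_1)$, obtaining
\begin{equation*}
2\Psi(P_1)^2-2\Psi(R_1)^2=D_2 \quad\text{and}\quad 2\Psi(Q_1)^2-2\Psi(R_1)^2=D_2 .
\end{equation*}
Subtracting these two identities eliminates both the term $\Psi(R_1)^2$ and the fixed operator $D_2$, leaving $2\Psi(P_1)^2=2\Psi(Q_1)^2$, that is $\Psi(P_1)^2=\Psi(Q_1)^2$, which is precisely \eqref{eq3}. The point worth stressing is that the subtraction is exactly what renders the argument insensitive to the actual (unknown) value of $D_2$, so no information about $D_2$ is needed.

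The crux, and the only genuine obstacle, is the existence of the anchor $R_1$: solving $2P_1^2-2R_1^2=D_1$ amounts to requiring $R_1^2=cI-\tfrac{1}{2}D_1$, so $cI-\tfrac{1}{2}D_1$ must admit a square root in $\mathscr{B}(\mathscr{H}_1)$. This is where the freedom in the fixed scalar $c$ is spent. I would fix $c$ with $|c|>\tfrac{1}{2}\|D_1\|$, so that $cI-\tfrac{1}{2}D_1=c\bigl(I-\tfrac{1}{2c}D_1\bigr)$ with $\bigl\|\tfrac{1}{2c}D_1\bigr\|<1$; then $I-\tfrac{1}{2c}D_1$ is invertible and close to the identity, and the binomial series $\sum_{k\ge 0}\binom{1/2}{k}\bigl(-\tfrac{1}{2c}D_1\bigr)^k$ converges in norm to a bounded square root (equivalently, the holomorphic functional calculus applies, since the spectrum of $cI-\tfrac{1}{2}D_1$ then lies in a small disc around $c$, bounded away from $0$). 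Multiplying by a scalar square root of $c$ yields the desired $R_1$, and operators with $P_1^2=cI$ certainly exist for such $c$ (e.g. $\sqrt{c}\,V$ for any involution $V$), so the statement is non-vacuous. With $R_1$ in hand, the subtraction argument of the preceding paragraph closes the proof for this fixed $c$; more generally the same reasoning works for any $c$ for which $cI-\tfrac{1}{2}D_1$ possesses a square root.
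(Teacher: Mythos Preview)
Your proof is correct and follows essentially the same route as the paper: introduce an anchor operator $T_1$ (your $R_1$) with $T_1^2=cI-\tfrac12 D_1$, apply \eqref{eq2} to the pairs $(P_1,T_1)$ and $(Q_1,T_1)$, and subtract. The only difference is in how the square root of $cI-\tfrac12 D_1$ is secured: the paper simply assumes $cI-\tfrac12 D_1$ is a positive operator, whereas you take $|c|>\tfrac12\|D_1\|$ and invoke the binomial series/holomorphic functional calculus, which is arguably cleaner since $D_1$ is not assumed self-adjoint.
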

\begin{proof}
	Let $\Psi$ satisfy \ref{eq2}, and let $P_1, Q_1 \in \mathscr{B}(\mathscr{H}_1)$ be such that $P_1^2=Q_1^2=cI$. Let $cI- \frac{D_1}{2} \in \mathscr{B}(\mathscr{H}_1)$ be a positive operator (and hence, has a square root), we know there exists $T_1 \in \mathscr{B}(\mathscr{H}_1)$ such that $$2P_1^2-2T_1^2=2Q_1^2-2T_1^2=D_1.$$ We have that $$2\Psi(P_1)^2-2\Psi(T_1)^2=2\Psi(Q_1)^2-2\Psi(T_1)^2=D_2.$$ Hence, $$\Psi(P_1)^2=\Psi(Q_1)^2.$$
\end{proof}
\begin{lemma}
    \label{lem4}
    Let a linear map $\Psi: \mathscr{B}(\mathscr{H}_1) \to \mathscr{B}(\mathscr{H}_2)$ satisfying \ref{eq3}. Then it is a square-zero preserving map.
\end{lemma}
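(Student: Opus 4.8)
The goal is to prove that $A^{2}=0$ implies $\Psi(A)^{2}=0$. My plan is to attach to each square-zero operator $A$ a \emph{symmetry} $J\in\mathscr{B}(\mathscr{H}_{1})$ with $J^{2}=cI$ (the fixed scalar appearing in \ref{eq3}) that anticommutes with $A$, in the sense that $JA+AJ=0$. Granting such a $J$, a direct expansion gives
$$(J\pm A)^{2}=J^{2}\pm(JA+AJ)+A^{2}=cI,$$
so that $J+A$ and $J-A$ both square to $cI$. One can then feed the pairs $(J+A,\,J)$ and $(J-A,\,J)$ into \ref{eq3} to obtain two relations whose combination isolates $\Psi(A)^{2}$.

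The key step is the construction of $J$, which I would derive from the structure of square-zero operators. The trivial cases are handled first: if $A=0$ there is nothing to prove, and if $c=0$ then applying \ref{eq3} to the pair $(A,0)$ already gives $\Psi(A)^{2}=\Psi(0)^{2}=0$. So assume $A\neq0$ and $c\neq0$ (this is legitimate, since the scalar $c$ produced in Lemma \ref{lem3} may be taken nonzero). Decompose $\mathscr{H}_{1}=\ker A\oplus(\ker A)^{\perp}$. Because $A$ annihilates $\ker A$, the first block-column of $A$ is zero; because $A^{2}=0$ forces $\operatorname{ran}A\subseteq\ker A$, the $(\ker A)^{\perp}$-component of $Ax$ vanishes for every $x$, killing the lower-right block. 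Hence, with respect to this decomposition,
$$A=\begin{pmatrix}0 & A_{12}\\ 0 & 0\end{pmatrix}.$$
Let $P$ be the orthogonal projection of $\mathscr{H}_{1}$ onto $\ker A$ and set $J=\sqrt{c}\,(2P-I)$. Then $J$ is bounded and self-adjoint, $J^{2}=c(2P-I)^{2}=cI$, and the block form above shows at once that $JA=-AJ$. I expect this structural reduction — pinning down that the lower-right block of $A$ is zero, so that the symmetry $2P-I$ genuinely anticommutes with $A$ — to be the main obstacle, together with the minor point of arranging $c\neq0$.

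With $J$ in hand the conclusion is a short computation. Linearity of $\Psi$ gives $\Psi(J\pm A)=\Psi(J)\pm\Psi(A)$, and applying \ref{eq3} to $(J+A,\,J)$ and to $(J-A,\,J)$ yields
$$\bigl(\Psi(J)+\Psi(A)\bigr)^{2}=\Psi(J)^{2}=\bigl(\Psi(J)-\Psi(A)\bigr)^{2}.$$
Expanding the two outer expressions gives $\Psi(J)\Psi(A)+\Psi(A)\Psi(J)+\Psi(A)^{2}=0$ and $-\Psi(J)\Psi(A)-\Psi(A)\Psi(J)+\Psi(A)^{2}=0$; adding these cancels the cross terms and leaves $2\Psi(A)^{2}=0$, hence $\Psi(A)^{2}=0$. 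Since $A$ was an arbitrary square-zero operator, this shows $\Psi$ is square-zero preserving, as required.
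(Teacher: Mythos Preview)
Your argument is correct and follows essentially the same route as the paper: introduce an operator $J$ with $J^{2}=cI$ that anticommutes with the square-zero $A$, then feed $J\pm A$ and $J$ into \eqref{eq3} and combine. The only difference is that you actually construct $J=\sqrt{c}\,(2P-I)$ via the decomposition $\mathscr{H}_{1}=\ker A\oplus(\ker A)^{\perp}$, whereas the paper simply posits the existence of such an $M_{1}$ without building it; your final algebra (adding the two expanded identities) is a mild rearrangement of the paper's two-step use of \eqref{eq3}.
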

\begin{proof}
    Let $N_1 \in \mathscr{B}(\mathscr{H}_1)$ be a square-zero operator (i.e., $N_1^2=0$) and $M_1 \in \mathscr{B}(\mathscr{H}_1)$ be an operator such that $N_1 \circ M_1=0$ and $M_1^2=cI$. Then $$(N_1+M_1)^2=(N_1-M_1)^2=cI.$$ From \ref{eq3}, we have $$\Psi(N_1+M_1)^2=\Psi(N_1-M_1)^2$$ and after using linearity of $\Psi$, we get $$\Psi(N_1) \circ \Psi(M_1)=0.$$ We can see that $$M_1^2=(N_1+M_1)^2=cI,$$ and again using \ref{eq3} yields $$\Psi(M_1)^2=\Psi(N_1+M_1)^2.$$ Since linearity of $\Psi$ gives us $$\Psi(M_1)^2=\Psi(N_1)^2+\Psi(M_1)^2+\Psi(N_1)\circ \Psi(M_1)$$ and using $\Psi(N_1)\circ \Psi(M_1)=0$, finally we get $$\Psi(N_1)^2=0.$$ Hence, $\Psi$ preserves square-zero operators.
\end{proof}

Proof of Theorem \ref{th1}.
\begin{proof}
	Let $A' \in sl_n$ be a rank-one matrix. We aim to show that $\Psi$ preserves $A'$ to a rank-one trace-zero matrix. By Lemma \ref{lem1}, we have $$\Psi(A')=B'$$ where $B'$ is a rank-one trace-zero matrix. By Marcus and Moyls [\cite{Mar}, Theorem 1], if $\phi: \mathscr{M}_n \to \mathscr{M}_n$ is a linear map such that $\text{rank}(\phi(X))=1$ whenever $\text{rank}(X)=1$ for every $X \in \mathscr{M}_n$, then $$\phi(A_1)=UA_1V$$ or $$\phi(A_1)=UA_1^tV$$ for all $A_1 \in \mathscr{M}_n$, where $U$ and $V$ are non-singular matrices.

    Let $A_1 \in sl_n$, $I$ is identity matrix of order $n$ and a scalar $c \in \mathbb{C}$, now we define a map $\phi: \mathscr{M}_n \to \mathscr{M}_n$ by $$\phi(A_1+cI)=\Psi(A_1)+cI.$$ It is easy to check that the map $\phi$ is bijective and linear, and agrees with $\Psi$ on every trace-zero matrix. 

    Now, since $\phi(A_1)=\Psi(A_1)$ for every $A_1 \in sl_n$, this implies $$\text{tr}(\phi(A_1))=0$$ for all $A_1 \in sl_n$. If $\phi(A_1)=UA_1V$, then $$0=\text{tr}(\phi(A_1))=\text{tr}(UA_1V)=\text{tr}(VUA_1)$$ for all $A_1\in sl_n$. Therefore, it follows that $VU=cI$ for some non-zero scalar $c$. Hence, $$\phi(A_1)=cUA_1U^{-1}$$ for all $A_1 \in sl_n$. Similarly, if $\phi(A_1)=UA_1^tV$, then $$\phi(A_1)=cUA_1^tU^{-1}$$ for all $A_1 \in sl_n$.
\end{proof}
Proof of Theorem \ref{th2}.
\begin{proof}
	Now, we will extend $\Psi$ to complete matrix algebra $\mathscr{M}_n$. As every matrix $A_1 \in \mathscr{M}_n$ can be uniquely written as $$A_1 = A_1' + \lambda I$$ for some scalar $\lambda$ and a trace-zero matrix $A_1'$. 
%    If we can show that $\Psi(I)=c I$, then we will have either
%   \begin{align*}
%        \Psi(A_1)=&\Psi(A_1' + \lambda I)\\
%                 =&\Psi(A_1')+ \lambda \Psi(I)\\
%                =& c U A_1' U^{-1} + \lambda  U c I U^{-1}\\
%               =&cUA_1U^{-1}
%    \end{align*}
%    for every $A_1 \in \mathscr{M}_n$ or, similarly, $$\Psi(A_1)=cUA_1^tU^{-1}$$ for ever $A_1 \in \mathscr{M}_n$. \\
    It suffices to show that $\Psi(I)=cI$. By parts $1$ and $2$ of Lemma \ref{lem1}, we have that $$[\Psi(I),~\Psi(Y)]=0$$ for all $Y \in \mathcal{C}(A_1)$, and $$[\Psi(X),~\Psi(I)]=0$$ for all $X \in \mathcal{C}(A_2)$. Thus, $$\text{dim}~\mathcal{C}(\Psi(I)) > n^2-2n+2.$$ Using [\cite{wat}, Lemma], we get $$\Psi(I)=cI$$ for some scalar $c$.
    Now, if $A_1 \in sl_n$, then $\Psi(A_1)=B_1+k I$ for some $B_1 \in sl_n$ and $k \in \mathbb{C}$, so there exists a linear functional $\eta$ such that the map $\Psi(A_1)-\eta(A_1)I$ is a bijective and linear, and the resulting matrix is trace-zero whenever $A_1$ is trace-zero. We can define $\psi: sl_n \to sl_n$ such that $$\psi(A_1)=\Psi(A_1)-\eta(A_1)I$$ and apply Theorem \ref{th1}.
\end{proof}
Proof of Theorem \ref{th3}.
\begin{proof}
    We can see that Lemmas \ref{lem2}, \ref{lem3}, and \ref{lem4} together imply that $\Psi$ preserves square-zero operators.
\end{proof}
Proof of Theorem \ref{th4}.
\begin{proof}
By Lemma \ref{lem4}, any map $\Psi: \mathscr{B}(\mathscr{H}_1) \to \mathscr{B}(\mathscr{H}_2)$ satisfies the assumptions in Theorem \ref{th4} preserves square-zero operators, and  [\cite{bai}, Corollary 2.2] gives us if any bijective linear map $\Psi: \mathscr{B}(\mathscr{H}_1) \to \mathscr{B}(\mathscr{H}_2)$ preserves square-zero operators. Then
\begin{align*}
    &\Psi(L) \Psi(I)+ \Psi(I)\Psi(L)=2\Psi(L)^2,\\
    & \Psi(I)^2 \Psi(L)=\Psi(L) \Psi(I)^2,
\end{align*}
and $ \Psi(L)^2\Psi(I)=\Psi(I)\Psi(L)^2$ for all idempotent $L \in \mathscr{B}(\mathscr{H}_1)$.\\
It follows from \cite{pea} that every element in $\mathscr{B}(\mathscr{H}_1)$ can be written as a sum of at most five idempotents. It means, $\Psi(I)^2 \Psi(X)=\Psi(X) \Psi(I)^2$ holds for each $X \in \mathscr{B}(\mathscr{H}_1)$. The bijectivity of $\Psi$ gives us $$\Psi(I)^2=cI,$$ for some non-zero scalar $c$. Without loss of generality, we assume $\Psi(I)^2=I$. Then there exists an idempotent $L_1$ such that $$\Psi(I)=2L_1 -I.$$ Using the same argument as above to $\Psi ^{-1}$, we get that for some non-zero scalar $\lambda$ such that $\Psi ^{-1}(I)^2=\lambda I$, and for each idempotent $L_2 \in \mathscr{B}(\mathscr{H}_2)$ such that
$$\Psi ^{-1}(L_2) \Psi ^{-1}(I)+ \Psi ^{-1}(I)\Psi ^{-1}(L_2)=2\Psi ^{-1}(L_2)^2 $$ and $$ \Psi ^{-1}(I) \Psi ^{-1}(L_2)^2=\Psi ^{-1}(L_2)^2 \Psi ^{-1}(I)$$ holds. Since $\Psi(I)=2L_1 -I$, one gets $\Psi ^{-1}(I)=2 \Psi^{-1}(L_1)-I$, and using it we get
 $$\Psi ^{-1}(L_2) (2\Psi ^{-1}(L_1)-I)+ (2\Psi ^{-1}(L_1)-I)\Psi ^{-1}(L_2)=2\Psi ^{-1}(L_2)^2 $$ holds for all idempotents $L_2$. Let $L_2=L_1$ then $$\Psi ^{-1}(L_1) (2\Psi ^{-1}(L_1)-I)+ (2\Psi ^{-1}(L_1)-I)\Psi ^{-1}(L_1)=2\Psi ^{-1}(L_1)^2 $$ after some calculations we get $$\Psi^{-1}(L_1)=\Psi^{-1}(L_1)^2.$$ Since $L_3=\Psi^{-1}(L_1)$ is an idempotent. It clear that $$\Psi^{-1}(I)^2=I.$$ Also, $\Psi(L)\Psi(I)+\Psi(I)\Psi(L)=2\Psi(L)^2$ and using $\Psi(I)=2L_1-I$, we get $$\Psi(L)(2L_1-I)+(2L_1-I)\Psi(L)=2\Psi(L)^2.$$ Simplifying above, we have $L_1\Psi(L)+\Psi(L)L_1=\Psi(L)^2+\Psi(L).$ It gives us
\begin{align*}
    (L_1-\Psi(L))^2=& L_1^2+\Psi(L)^2-L_1\Psi(L)-\Psi(L)L_1\\
                      =& L_1+\Psi(L)^2-\Psi(L)^2-\Psi(L)\\
                      =&L_1-\Psi(L)
\end{align*}
and hence $\Psi(L_2-L)^2=\Psi(L_2-L)$.
Let $N_1 \in L_2\mathscr{B}(\mathscr{H}_1)(1-L_2)$ be an element. Then $ N_1^2=0 ~~\text{and}~~(L_2+N_1)^2=L_2+N_1.$ Since $\Psi$ preserves square zero operators,
\begin{align*}
    0=&\Psi(N_1)^2\\
    =&\Psi(L_2-(L_2+N_1))^2\\
    =&\Psi(L_2-(L_2+N_1))\\
    =&-\Psi(N_1).
\end{align*}
Hence, $N_1=0.$\\
This shows that $L_2\mathscr{B}(\mathscr{H}_1)(1-L_2)=\{0\}$. Therefore, either $L_2=0$ or $L_2=1$. Hence, it is clear that $\Psi^{-1}(I)=\pm I$ and $\Psi(I)=\pm I$. Thus, we have $\Psi(I)=cI$ for some non-zero scalar $c$. Further, without loss of generality, we may assume that $\Psi(I)=I$ and again using $\Psi(L)\Psi(I)+\Psi(I)\Psi(L)=2\Psi(L)^2$, we have $\Psi(L)=\Psi(L)^2$. Thus, we get our desired result using [\cite{bre3}, Theorem 1].
\end{proof}	

\section*{Acknowledgment}
The first author thanks the University Grants Commission (UGC), Govt. of India, for financial support under UGC Ref. No. 231620125825, all authors thank the Indian Institute of Technology Patna for providing the research facilities.

\section*{Declarations}
~~~~~~\textbf{Data Availability Statement}: The authors declare that [the/all other] data supporting the findings of this study are available in the article.\\

\textbf{Disclosure statement}: The authors report that no competing interests exist to declare.\\

\textbf{Use of AI tools Declaration}: The authors declare that they have not used Artificial Intelligence (AI) tools in creating this manuscript.

\end{document}